\documentclass[12pt]{article}
\usepackage{amsfonts,amscd,amssymb}
\usepackage{theorem}
\newtheorem{definition}{Definition}[section]
\newtheorem{lemma}[definition]{Lemma}

{\theorembodyfont{\rmfamily}}
{\theorembodyfont{\rmfamily}}
\newtheorem{theorem}[definition]{Theorem}
{\theorembodyfont{\rmfamily}}
{\theorembodyfont{\rmfamily}}
{\theorembodyfont{\rmfamily}}

\newenvironment{proof}{{\it Proof.}}{\hfill $ \square $ \vskip 4mm}

\begin{document}

\title{An additive problem in connection with some quadratic real fields with class number one}

\author { Alexandru GICA}
\date{University of Bucharest, Faculty of Mathematics and Informatics\\Str. Academiei 14, Bucharest 1, Romania RO-010014\\ e-mail address: alexandru.gica@unibuc.ro}

\maketitle \small{ {\bf Abstract:}  
Our aim is to find all the prime numbers $p$ such that $p-x^2$ has at most two different prime factors, for all the odd integers $x$ such that $x^2< p$. We solve entirely the cases $p\equiv 1,5,7 \pmod 8$.  The case $p\equiv 3 \pmod 8$ is solved with one possible exception.

\textit{Dedicated to the memory of Florian Luca}

\textit{Key words and phrases:} Class numbers, Continued fractions.

\textit{Mathematics Subject Classification (2020):} 11R29, 11R11, 11A55.

\section{Introduction} 
$\;\;\;$The main goal of this paper is to prove the following.
\begin{theorem}
Let $p$ be an odd prime such that $p-x^2$ has at most two different prime factors for any odd integer $x$ such that $x^2< p$. If $p\equiv 7 \pmod 8$, then $p=7,23,47,167$. If $p\equiv 5 \pmod 8$, then $p=5,13,29,37,53,101,173,197,293,677$. If $p\equiv 1 \pmod 8$, then $p=17,41,73,89,97,113,137,233,257,353,593,857,1097,1217,1553,2273,5297$.  If $p\equiv 3 \pmod 8$ then, with one possible exception, $p=3,11,19,59,83,107,227.$ 
\end{theorem}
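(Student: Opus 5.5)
Fix an odd prime $p$ with the property and consider the numbers $p-x^2$ for odd $x$, $x^2<p$. Since $x$ is odd, $p-x^2\equiv p-1 \pmod 8$, so the residue class of $p$ modulo $8$ fixes the exact power of $2$ dividing every such number. For $p\equiv 3\pmod 8$ this power is $2^1$. For $p\equiv 7\pmod 8$ it is $2^1$ as well. For $p\equiv 5\pmod 8$ it is $2^2$. For $p\equiv 1\pmod 8$ it is at least $2^3$ and varies with $x$. In every case $2$ is one of the at most two prime factors. So the hypothesis says that $p-x^2=2^a q^b$ with a single odd prime $q=q(x)$, or $p-x^2$ is a power of $2$. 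The odd prime $q$ divides $p-x^2$, so $p$ is a square modulo $q$, i.e.\ $(p/q)=1$. The plan is to exploit this for \emph{all} odd $x<\sqrt p$ simultaneously and so obtain an upper bound for $p$ in each residue class. The finitely many remaining primes are then checked directly by computer.

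The key steps, in order, are as follows.

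(i) Fix a small odd prime $\ell$, say $\ell=3,5,7,\dots$, and count the odd $x<\sqrt p$ with $\ell\mid p-x^2$. If $(p/\ell)=1$, a positive proportion (about $2/\ell$) of these $x$ give $\ell\mid p-x^2$. The hypothesis then forces $p-x^2=2^a\ell^b$ for each such $x$. But the number of integers of the form $2^a\ell^b$ below $p$ is $O((\log p)^2)$, whereas there are about $\sqrt p/\ell$ such $x$. This gives a contradiction once $p$ exceeds a computable bound. Hence for large $p$ we get $(p/\ell)=-1$ for every odd prime $\ell$ up to some $L$, and $L$ grows with $p$.

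(ii) Make (i) effective for many primes $\ell$ at once. Equations such as $p-x^2=2^a\ell^b$ taken with two different $x$ give $x_1^2-x_2^2=2^{a}\ell^{b}-2^{a'}\ell^{b'}$, which sharply restricts the possibilities. Alternatively, for $x=1$ and $x=\lfloor\sqrt p\rfloor$ (odd adjusted), the number $p-x^2$ is small, of size about $2\sqrt p$. This links the problem to the field $\mathbb{Q}(\sqrt p)$ or $\mathbb{Q}(\sqrt{2p})$: the condition $(p/\ell)=-1$ for all small $\ell$ says that small primes are inert there. This is exactly the situation of real quadratic fields of class number one with very few split small primes. It explains the paper's title and keywords, and I would invoke results on continued fractions of $\sqrt p$ (the partial quotients and the norms $P_k^2-pQ_k^2$, which are small and of the shape $\pm2^a q^b$) to pin $p$ down.

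(iii) Combine these. If $(p/\ell)=-1$ for all odd $\ell\le L$ with $L\approx c\log p$, then a small norm element forces the least split prime to be large, contradicting known bounds. Unconditionally I would use a sieve- or counting-type argument: among the roughly $\sqrt p/2$ values $p-x^2$, each is $2^aq^b$, and the values with $b\ge2$ are rare. So $p-x^2$ is essentially $2^a$ times a prime for all $x$, and this is incompatible with elementary counting once $p$ is past an explicit threshold, e.g.\ $10^{10}$ or so. Below that threshold a direct search lists the primes stated in the theorem.

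The main obstacle is step (iii), namely obtaining an \emph{unconditional} explicit bound. Showing that $(p/\ell)=-1$ for all small $\ell$ is impossible for large $p$ is a class-number-one type problem, and it is plausibly ineffective. That matches the ``one possible exception'' in the case $p\equiv3\pmod 8$. I would first try to handle each residue class through the extra $2$-adic structure: in the classes $1,5,7\pmod 8$ the power of $2$ or the unit of $\mathbb{Q}(\sqrt{p})$ gives an additional small-norm element that can be exploited. I would accept a Siegel-type ineffective exception in the class $3\pmod 8$.
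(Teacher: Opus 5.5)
There is a genuine gap, and it is step (iii): nothing in the proposal actually bounds $p$, let alone produces the explicit lists. Step (i) is sound. It is a counting version of the paper's Lemma and gives $(p/\ell)=-1$ for every odd prime $\ell$ up to a constant times $\sqrt p/\log^2 p$. But it cannot be pushed further by counting or sieving. Once every odd $\ell\le L$ is inert, \emph{every} value $p-x^2$ is automatically free of odd prime factors $\le L$. A sieve of level $\le L$ therefore sees no obstruction at all, since all local densities $\rho(\ell)=1+(p/\ell)$ vanish. The hypothesis can then only fail through values whose odd part is a product of two distinct primes exceeding $L$, and whether such a value occurs is a class-number question, not a counting one. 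The listed solutions show this phenomenon is real: for $p=677$, the number $677-x^2$ is four times a prime for every odd $x$ with $3\le x\le 25$.

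The bounds you allude to for the least split prime ($O(\log^2 p)$) hold only under GRH; inertness up to $c\log p$, as you wrote, would not even contradict them. Combined with your step (i), they would give at best a GRH-conditional result, whereas the statement is unconditional in the classes $1,5,7\pmod 8$. A Siegel-type argument gives at best ineffective finiteness, which yields neither the lists nor ``at most one exception''. In the paper that exception has a specific origin: the Mollin--Williams determination of class-number-one real quadratic fields with period length $6$ may miss one field, which is excluded under GRH.

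The missing idea is to apply the hypothesis at a few well-chosen $x$ for which $p-x^2$ factors algebraically. This forces $p$ into a shape with small fundamental unit, and only then do class numbers enter.
\begin{itemize}
\item For $p\equiv 7\pmod 8$: write $p+a^2=2b^2$ and take $x=a$. Then $p-a^2=2(b-a)(b+a)$, which gives $p=y^2-2$.
\item For $p\equiv 5$: write $p=m^2+4n^2$. Then $p-(2n-m)^2=4mn$ forces $p=m^2+4$ or $p=1+4n^2$.
\item For $p\equiv 3$: write $p=m^2+2n^2$, so that $n=1$ or $n$ is prime. The choices $x=2n-m$ and $x=3m$ force $n=1$ or $n=2m\pm1$.
\end{itemize}
A Minkowski-bound argument then gives class number one for $\mathbb{Q}(\sqrt p)$. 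The results of Lee (wide Richaud--Degert types), Bir\'o (Yokoi's and Chowla's conjectures) and Mollin--Williams (period length $6$ of $\sqrt p$) finish these classes. Your step (ii) gestures at this world, but those theorems apply only once the special shape has been established.

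For $p\equiv 1\pmod 8$ the paper is entirely elementary. Placing $p$ between consecutive squares and taking $x=2^{u-2}\pm1$ leads, after disposing of $p=73$ and $p=97$, to $p=(2^{2r+1}+2^s-1)^2+2^{s+2}$ or $p=(2^r-2^s+1)^2+2^{s+2}$. The orders of $2$ modulo $5,7,13,17,109,241,433$ all divide $72$. So a finite check of the exponents modulo $72$ shows that $p$ is a quadratic residue modulo one of these primes. The Lemma then gives $p<16\cdot 433^2$, and a direct search finishes. Your remark about ``extra $2$-adic structure'' points in this direction, but none of these reductions is in the plan, and without them it has no mechanism for reaching the explicit lists.
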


$\;\;\;$ We divided the proof of Theorem 1.1 in four cases. For $p\equiv 7 \pmod 8$ we prove that $p=a^2-2$, the class number of $\mathbb{Q}(\sqrt {p})$ is one and we use a result of J. Lee to solve this case. For $p\equiv 5 \pmod 8$, we prove that $p=a^2+4$ or $p=a^2+1$, the class number of $\mathbb{Q}(\sqrt {p})$  is one and use some classical results of A. Bir\'{o}  to solve this case. For $p\equiv 3 \pmod 8$  the class number of $\mathbb{Q}(\sqrt {p})$ is equal to 1. There are two possibilities. For $p=a^2+2$, we solve completely the problem (using again the result of Lee). The other possibility is $p=a^2+2b^2$, where $b$ is a prime number which equals $2a\pm 1$. This case is not completely solved. Since the length of the period of $\sqrt p$ (as a continuous fraction) is 6, using a result of Mollin and Williams, we found the list of the values of $p$ with one possible exception, which is excluded under the assumption of the Riemann Hypothesis. 

In the case $p\equiv 1 \pmod 8$, the proof is long and the computations are tedious, but, surprisingly enough, everything is elementary. We prove, in this case, that  $$p=17,41,73,89,97,113,137,233,257,353,593,857,1097,1217,1553,2273,5297.$$ If $p\neq 73,97$, we will prove that $p=(2^x+2^y-1)^2+2^{x+2}$ or $p=(2^y-2^x+1)^2+2^{x+2}$, for some positive integers $x,y$. After that, we will prove that $p$ is a quadratic residue modulo a prime $q$ in the set $\{5,7,13,17,109,241,433\}$. A lemma will ensure us that $p<16\cdot 433^2$. After some computations, we find all the possible values for $p$.

$\;\;\;$In the sequel we want to give some motivation for our work. We will use the standard notation $\omega$. If $n=p_1^{a_1}p_2^{a_2}...p_r^{a_r}$ is the standard decomposition of $n$ as product of primes (this means that $p_j$ are different prime numbers and $a_j$ are positive integers), then $\omega (n)=r$ (we consider $\omega (1)=0$). The prime numbers $p$ from Theorem 1.1 have the property $\omega (p-x^2)\leq 2$, for any odd integer $x$ such that $x^2< p$. We proved in [7] an analogue of Theorem 1.1.

\begin{theorem}
Let $p$ be an odd prime such that $p+x^2$ has at most two different prime factors for any odd integer $x$ such that $x^2<p$. If $p\equiv 5 \pmod 8$, then $p=5,13,37$. If $p\equiv 1 \pmod 8$, then $p=17,73,97,193$. If $p\equiv 3 \pmod 8$, then $p=3,11,19,43,67,163$.  If $p\equiv 7 \pmod 8$ then with one possible exception $$p=7,23,31,47,79,103,127,151,223,463,487,823,1087,1423.$$ Assuming a Restricted Riemann Hypothesis, the above list in the case $p\equiv 7 \pmod 8$ is complete.
\end{theorem}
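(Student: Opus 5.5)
Theorem 1.2 is the author's earlier result from [7], the analogue of Theorem 1.1 with $p+x^2$ in place of $p-x^2$. I would prove it along the same lines as the strategy sketched above for Theorem 1.1: first show that the class number of a suitable quadratic field is one, then use known classifications of class-number-one fields.

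The starting observation is that for odd $x$, $p+x^2$ is even, and its $2$-adic valuation is fixed by $p \bmod 8$. For $p\equiv 1\pmod 8$ we get $p+x^2\equiv 2\pmod 8$, and for $p\equiv 5\pmod 8$ we get $p+x^2\equiv 6\pmod 8$. In both cases the hypothesis says $(p+x^2)/2$ is an odd prime power. For $p\equiv 3,7\pmod 8$ the number $p+x^2$ is divisible by $4$, and $(p+x^2)/2^k$ is a prime power.

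The key step links this to binary quadratic forms of discriminant $-4p$ (or $-p$), i.e. to the imaginary field $\mathbb{Q}(\sqrt{-p})$. If $q$ is an odd prime with $\left(\frac{-p}{q}\right)=1$ and $q$ is small (say $q^2<p/4$ or similar), one finds an odd $x$ with $x^2<p$ and $q\mid p+x^2$. Combined with the prime-power condition, this forces a representation $2q^m=p+x^2$, which is impossible for small $q$ by size. So every small odd prime $q$ is inert in $\mathbb{Q}(\sqrt{-p})$. This is the analogue of Rabinowitsch's criterion, and it yields class number one or a very small class group, possibly after handling the genus $2$-part.
\begin{itemize}
\item For $p\equiv 3\pmod 8$, the form $x^2+xy+\frac{p+1}{4}y^2$ is relevant. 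We get $h(-p)=1$, and Heegner--Baker--Stark gives $p=3,11,19,43,67,163$.
\item For $p\equiv 1,5\pmod 8$, the discriminant $-4p$ has $h$ even, so we aim for $h(-4p)=2$ (an idoneal-type condition). The classification of imaginary quadratic fields with class number $2$ (Baker, Stark), or of Euler's idoneal numbers, then leaves $p=5,13,37$ and $p=17,73,97,193$ respectively, after checking each candidate by hand.
\end{itemize}

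The main obstacle is $p\equiv 7\pmod 8$. Here $\mathbb{Q}(\sqrt{-p})$ has $2$ split, so the small-prime argument only bounds the odd part of the class group, giving something like $h(-p)$ a power of $2$ with strong extra structure. The natural tool is Weinberger's theorem on idoneal numbers, or on discriminants whose class group has exponent $\le 2$. That theorem is complete except for at most one exceptional discriminant, which disappears under GRH. This matches the wording ``with one possible exception'' and the Restricted Riemann Hypothesis clause in the statement.

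So for $p\equiv 7\pmod 8$ I would do the following:
\begin{enumerate}
\item Prove that the class group of discriminant $-4p$ (or $-p$) is a $2$-group of exponent $2$, or of a similarly restricted shape.
\item Invoke Weinberger's result to obtain the finite list with one possible exception.
\item Verify the fourteen listed primes directly by computation.
\end{enumerate}

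The delicate part is step 1: producing, from any non-principal non-ambiguous form class, an odd $x<\sqrt p$ for which $p+x^2$ has three distinct prime factors.
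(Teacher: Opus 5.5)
The paper gives no proof of this statement; it is quoted from [7], so there is nothing here to compare with step by step. On its own terms, your outline has the right shape for $p\equiv 3\pmod 8$ (force $h(-p)=1$, then Heegner--Baker--Stark, matching the paper's Frobenius--Rabinowitsch remark) and for $p\equiv 5\pmod 8$ (force $h(-4p)=2$). For $p\equiv 1$ and $7\pmod 8$, however, it aims at class-number conditions that the listed primes themselves violate.

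\textbf{The case $p\equiv 1\pmod 8$.} R\'edei's theorem gives $4\mid h(-4p)$, so $h(-4p)=2$ never holds. The primes with $h(-4p)=2$ are $5,13,37$, and the prime idoneal numbers are $2,3,5,7,13,37$. Neither classification can therefore output $17,73,97,193$, which in fact have $h(-4p)=4$. Your small-prime step breaks exactly here. In $p+x^2=2q^m$, only $m=1$ is excluded by size; $m\ge 3$ needs the replacement $x\mapsto 2q-x$ used in the proof of Theorem 1.1. The case $m=2$ genuinely occurs for every listed prime: $17+1^2=2\cdot 3^2$, $73+5^2=2\cdot 7^2$, $97+1^2=2\cdot 7^2$, $193+7^2=2\cdot 11^2$. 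Such a $q$ lies below $\sqrt p$ and splits. We have $(x+\sqrt{-p})=\mathfrak p_2\mathfrak q^2$ with $\mathfrak p_2$ the non-principal ramified prime over $2$, so $[\mathfrak q]$ has order $4$. The right target is $\mathrm{Cl}\cong\mathbb Z/4\mathbb Z$, followed by Arno's determination of the imaginary quadratic fields of class number $4$. For $p\equiv 1,5\pmod 8$ you must also handle split primes between $\sqrt p$ and the bound $\sqrt{4p/3}$, which the hypothesis (restricted to $x^2<p$) does not reach directly.

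\textbf{The case $p\equiv 7\pmod 8$.} Here the plan cannot work at all. The discriminant $-p$ has a single prime divisor, so $h(-p)$ is odd. Since $2$ splits, $h(-4p)=h(-p)$. A class group of exponent at most $2$ is therefore trivial, and Weinberger's theorem (like the idoneal list) yields only $p=7$. Yet $h(-p)=3$ for $23,31$; $5$ for $47,79,103,127$; $7$ for $151,223,463,487$; and $9$ for $823,1087,1423$. So your step 1 is false for $13$ of the $14$ primes, and non-principal classes certainly exist for primes satisfying the hypothesis.

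What the list actually exhibits is that the class group is generated by a prime $\mathfrak p_2$ over $2$, and $h(-p)$ is as small as the splitting of $2$ permits. Since $\mathfrak p_2^{h}$ is principal but not generated by a rational integer, $2^{h}\ge (p+1)/4$ always holds. For each listed prime, $h(-p)$ is the least odd integer with this property, realized by $p+x^2=2^{h+2}$: for example $23+3^2=2^5$, $47+9^2=2^7$, $151+19^2=2^9$, $1423+25^2=2^{11}$.

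So step 1 should use the relations $[\mathfrak q]^m\in\langle[\mathfrak p_2]\rangle$ coming from $p+x^2=2^kq^m$. The aim is to show that the class group is $\langle[\mathfrak p_2]\rangle$ with $h(-p)$ of size about $\log_2 p$. Finiteness can then come from a Siegel--Tatuzawa type lower bound $h(-p)\gg p^{1/2-\varepsilon}$, which is effective except for possibly one discriminant and fully effective under GRH. That, not Weinberger's idoneal theorem, is the natural source of the ``one possible exception'' and of the Riemann Hypothesis clause.
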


The case $p\equiv 3 \pmod 8$ in Theorem 1.2 is in close connection with the celebrated theorem of Frobenius-Rabinowitsch, which characterize the imaginary quadratic fields with class number one, in terms of prime values of Euler's polynomial $E(X)=X^2+X+\frac{p+1}{4}$, on some consecutive integers. The case $p\equiv 5 \pmod 8$ in Theorem 1.1 was considered previously (with another formulation) by Mollin and Williams (see paper [9], where S. Louboutin gives a detailed description of the subject) and by myself in [4].

The interesting interplay between class numbers and the prime values of some quadratic polynomials, on some consecutive integers, when characterizing some real quadratic fields with class number one, is a classical area. The interested reader could find a presentation of these topics (results, history and the mathematicians who worked in this area) in the papers of Bir\'{o}, Chowla, Lee, Louboutin, Mollin, Williams and Yokoi.

{\bf Remark:} We proposed in 2025 a particular statement of Theorem 1.1 for the International Mathematical Olympiad (IMO).

\begin{theorem}
Let $p$ be a prime number such that $p=a^2+8$ and $\omega (p-x^2)\leq 2$ for any odd integer $x$ such that $x^2<p$. Then $p=17,89,233,1097$.
\end{theorem}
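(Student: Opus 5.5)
The plan is to exploit the algebraic identity obtained by writing $x=a-2k$ with $1\le k< a$ (so that $x$ is odd and $x^2<p$, up to sign):
$$p-(a-2k)^2=a^2+8-(a-2k)^2=4\bigl(k(a-k)+2\bigr).$$
Since $a$ is odd, $a\pm1$ are even, and the prime $2$ already divides every such value. The hypothesis $\omega(p-x^2)\le 2$ therefore says that the odd part of $k(a-k)+2$ is a prime power, for every admissible $k$. First I would dispose of the prime $3$. If $3\nmid a$ then $3\mid a^2+8=p$, so $p=3$, which is not of the form $a^2+8$. Hence $3\mid a$. The small cases $a=1,3$ are then handled by hand.

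\textbf{Step 1 (the two basic values).} Taking $k=1$ gives $p-(a-2)^2=4(a+1)$, and taking $k=2$ gives $p-(a-4)^2=8(a-1)$. So $a+1=2^{s}u^{\alpha}$ and $a-1=2^{r}v^{\beta}$ with $u,v$ odd primes (or the odd part equal to $1$). Since $\gcd(a-1,a+1)=2$, one of $s,r$ equals $1$, and $u\ne v$.

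\textbf{Step 2 (transporting divisibility).} Modulo $u$ we have $a\equiv-1$, hence
$$k(a-k)+2\equiv-(k+2)(k-1)\pmod u .$$
Modulo $v$ we have $a\equiv 1$, hence
$$k(a-k)+2\equiv-(k-2)(k+1)\pmod v .$$
Choosing $k\equiv-2\pmod u$ (for instance $k=u-2$) forces $u$ to divide the corresponding value. Choosing $k\equiv-1\pmod v$ (for instance $k=v-1$) forces $v$ to divide it. In each case the odd part of $k(a-k)+2$ must then be a power of $u$ (respectively of $v$).

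More generally, for any odd prime $q$ with $\left(\frac{p}{q}\right)=1$, the congruence $k^2-ak-2\equiv0\pmod q$ (whose discriminant is $p$) has a solution $k<q$. That $k$ gives a value whose odd part is a pure power of $q$. I would use these pure-power constraints together with the size relation $k(a-k)+2\approx ka$ to pin down the exponents. For example, the odd part of $(u-2)(a-u+2)+2$ being a power of $u$, combined with $a+1=2^su^\alpha$, should force an equation of the shape $2^{e}u^{f}=(u-2)(a+2)-(u-2)^2+2$. Reducing it modulo $u-2$ and modulo powers of $2$ should leave only finitely many possibilities. The same is done for $v$ using $k=v-1$.

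\textbf{Step 3 (bounding and checking).} The aim is to show that one of $a\pm1$ is essentially a power of $2$ and the other is $2$ or $4$ times a power of a single prime. Playing the two families of constraints from Step 2 against each other should then bound $a$, say $a\le 33$ or a little more. A finite check of odd $a$ with $3\mid a$ and $a^2+8$ prime then yields $a=3,9,15,33$, that is $p=17,89,233,1097$. I expect the main obstacle to be Step 2, the exponent bookkeeping. It is not obvious which $k$ gives a contradiction for large $a$. What I would try first is to use $k=u-2$ and $k=v-1$ simultaneously, together with $k=3$, for which $p-(a-6)^2=4(3a-7)$; this value is coprime to $3$ and must be of the form $2^{c}w^{\gamma}$. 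I would then compare the resulting exponential equations modulo small primes such as $5$, $7$ and $8$.
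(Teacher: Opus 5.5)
Your proposal has a genuine gap: Steps 2 and 3 never produce a bound on $a$, and the specific choices you suggest cannot produce one.

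The reduction you state as an ``aim'' is immediate from $x=3$. Indeed $p-9=(a-1)(a+1)=4\cdot\frac{a-1}{2}\cdot\frac{a+1}{2}$ with coprime consecutive factors, so one of $\frac{a\pm1}{2}$ is a power of $2$. Hence $a=2^s+1$ with $s$ odd, or $a=2^s-1$ with $s$ even.

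After that, your choices add nothing. If $a+1=2u$, then $k=u-2$ is just $x=a-2k=3$ again; likewise $a-1=2v$ and $k=v-1$ give $x=3$. More seriously, for $a=2^s-1$ the conditions from $k=1$, $k=2$ and $x=3$ all hold whenever $2^{s-1}-1$ is a Mersenne prime. For $a=2^s+1$ they all hold whenever $2^{s-1}+1$ is a Fermat prime. So these values alone cannot bound $a$ unless one settles open problems about such primes. Adding fixed values such as $k=3$ only adds more conditions of the type ``an exponential expression in $s$ is a prime power'', and you have no mechanism to rule these out for all large $s$. Your observation that a prime $q$ with $p$ a residue yields a value whose odd part is a power of $q$ is only a starting point: one such value is no contradiction.

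The missing ingredients are the two that drive the paper's proof. Theorem 1.3 is a special case of the $p\equiv1\pmod 8$ analysis there.

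\emph{First}, the Lemma of Section 4: if $q$ is an odd prime with $16q^2<p$, then $p$ is a non-residue modulo $q$. Suppose $p\equiv m^2\pmod q$ with $m<q$ odd. Apply the hypothesis to the four \emph{small} values $x=m,\,2q-m,\,2q+m,\,4q-m$. Each $p-x^2$ equals $2^{c}q^{e}$ with $c\ge3$ and $e\ge1$. The pairwise differences of these squares include $4q(q\mp m)$, $8qm$ and $8q(2q-m)$. Comparing $q$-adic and then $2$-adic valuations forces one of the four values to equal $8q$. That is impossible when $p>16q^2$, except in the borderline case $m=1$, $p=16q^2+1$. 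This case is excluded because $q^2\mid p-m^2$ was already ruled out along the way.

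\emph{Second}, a covering-congruence argument. Because $2^s$ is periodic modulo $5,7,13,17,109,241,433$, for $p>433$ the prime $p=(2^s\pm1)^2+8$ is a quadratic residue modulo one of these primes. In this subfamily, $5$ and $17$ already settle $a=2^s+1$. For $a=2^s-1$, the primes $7$ and $17$ settle everything except $s\equiv20\pmod{24}$, where $109$ and $433$ are used.

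Combined with the Lemma this gives $p<16\cdot433^2$, hence $a<1732$ and $s\le10$, and a finite check leaves $a=3,9,15,33$. So the small primes serve to make $p$ a residue so that the Lemma applies. They are not used, as you propose, to rule out exponential equations modulo $5$, $7$, $8$.
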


The Problem Selection Committee chose the problem for the Shortlist IMO 2025, as the Problem N8. The Committee changed a little bit the statement. They asked to prove that there are finitely many primes $p$ having the properties stated in Theorem 1.3
\section{The case $p\equiv 7 \pmod 8$}

$\;\;\;$ \begin{proof} We prove that in this case $p-x^2$ is twice a prime for any odd integer $x$ such that $x^2<p$. Let us suppose that there is an odd $x$ such that $x^2<p$ and $$p=x^2+2q^a,$$ for a prime $q$ and an integer $a\geq 2$. If $a=2$, then we obtain the contradiction
$$7\equiv p=x^2+2q^2\equiv 1+2 \equiv 3 \pmod 8.$$ Hence $a\geq 3$ and $4q^2<2q^a<p$. It follows that $(2q-x)^2<p$ and, according to the hypothesis, $$p=(2q-x)^2+2q^b.$$ It is easy to see that $q^2$ does not divide $p-(2q-x)^2$. This implies that $b=1$. We have 
$$2q=p-(2q-x)^2=2q^a-4q^2+4qx,$$
$$1=q^{a-1}-2q+2x=q(q^{a-2}-2)+2x\geq q+2x>1,$$ which is obviously a contradiction.
We proved that in this case $p-x^2$ is twice a prime number for any odd integer $x$ such that $x^2<p$ (the case $a=0, p=x^2+2$ is impossible; just consider a congruence modulo 8). We proved in [6] (see Theorem 3), a paper written together with Florian Luca, that this happens only for $p=7,23,47,167$. We first tackled the problem in 2004 (see[3]). Since in [3] and [6] we assumed that $p-a^2$ is twice a prime for any odd integer $a$ such that $a^2<p$ and $p-a^2$ is prime for any even integer $a$ such that $a^2<p$ (while here we know only the fact that $p-x^2$ is twice a prime for any odd integer $x$ such that $x^2<p$), we indicate here the steps of the proof. We prove first that $p=y^2-2$, in this case. By a classical result, there exist the positive integer $a,b$ ($a$ odd and $b$ even) such that $a^2<p$ and $p+a^2=2b^2$. Since $$p-a^2=2(b-a)(b+a)$$ should be twice a prime, it follows that $b=a+1$ and $p=2(a+1)^2-a^2=a^2+4a+2=(a+2)^2-2$. Let us suppose that $q$ is an odd prime such that $q\leq y-2$ and $p$ is a quadratic residue modulo $q$. Let $x$ an odd positive integer such that $x<q$ and $p\equiv x^2 \pmod q$. From what we proved above, it follows that $p=x^2+2q$. We obtain a contradiction since
$$p=x^2+2q<q^2+2q<(q+1)^2\leq (y-1)^2<y^2-2=p.$$ 
We proved that for any odd prime $q<y$, $p$ should be a quadratic non-residue modulo $q$. Since the Minkowski's constant 
for the quadratic field $K=\mathbb{Q}(\sqrt p)$ is $\sqrt p<y$ and 
$$N_{K/\mathbb{Q}}(y+\sqrt p)=2,$$ 
it follows that the ring of the integers of the quadratic field $K$ is principal. By a result of Jungyun Lee (see [8]), we know that $y\leq 19$. Only the values $y=3,5,7,13$ fit. Hence the only solutions in this case are $p=7,23,47,167$.
\end{proof}

\section{The case $p\equiv 5 \pmod 8$}

\begin{proof} From Fermat we know that $p$ is a sum of two squares, in this case. Since $p\equiv 5 \pmod 8$, it follows that there exist two odd positive integers $m,n$ such that 
$$p=m^2+4n^2.$$ Since $p-(2n-m)^2=4mn$ and $m,n$ are coprime, it follows (according to the hypothesis) that $m$ or $n$ should be 1. If both $m,n$ are 1, then $p=5$. 

{\bf The subcase $n=1,p=m^2+4>5$. } In this case, $p-x^2$ is 4 or four times a prime number for any odd integer $x$ such that $x^2<p$. Let us suppose that there is an odd positive integer $x$ such that $p=x^2+4q^a$, where $a\geq 2$ and $q$ is an odd prime. Since $4q^2<p$ and $x^2<p$, it follows that $(2q-x)^2<p$, and, according to the hypothesis, $p=(2q-x)^2+4q^b$. Since $q^2$ does not divide $p-(2q-x)^2$, it follows that $b=1$. We have
$$4q=p-(2q-x)^2=4q(q^{a-1}-q+x).$$ The only possibility is $a=2, x=1$. We obtained $p=m^2+4=1+4q^2$, which is a contradiction, since $p$ is uniquelly written as a sum of two squares (and $p\neq 5$). Hence, we proved that $p-x^2$ is 4 or four times a prime number for any odd integer $x$ such that $x^2<p$. We proved in 2008 (see [4], Theorem 1.2) that this happens only for the primes 
$$p=5,13,29,53,173,293.$$ The key point of the proof is that the class number of the field $\mathbb{K}=\mathbb{Q}(\sqrt {p})$ is one. According to Yokoi's Conjecture proved by A. Bir\'{o} (see [1]), this happens only for the aforementioned values of $p$.

{\bf The subcase $m=1,p=1+4n^2>5$. } Exactly as in the first case, we can prove that for any odd positive integer $x$ such that $x^2<p$, we have $p-x^2=4q^a$, where $q$ is an odd prime and $a=0,1,2$. Since $p$ could be written in only one way as a sum of two squares, it follows that the value $a=2$ occurs only for $x=1$, and the value $a=0$ does not occur. We proved in 2010 (see [5], Theorem 3.1) that this happens only for $p=37,101,197,677.$ The key point of the proof is that the class number of the field $\mathbb{K}=\mathbb{Q}(\sqrt {p})$ is one. According to Chowla's Conjecture proved by A. Bir\'{o}, this happens only for the aforementioned values of $p$.

\end{proof}

\section{ The case $p\equiv 3 \pmod 8$}

\begin{proof} Let $p\equiv 3 \pmod 8$ be a prime number as in the statement of Theorem 1.1 and let us consider $x$ an odd integer such that $x^2\leq p$. We know that $p-x^2=2q^a$. Let us suppose that $a\geq 3$. Then $4q^2<2q^a<p$, and $(2q-x)^2<p$. According to the hypothesis, $p=(2q-x)^2+2q$ (we used the fact that $q^2$ does not divide $p-(2q-x)^2)$). We have 
$$2q=p-(2q-x)^2=2q^a-4q^2+4qx.$$
Dividing by $2q$, we reach a contradiction:
$$1=q(q^{a-2}-2)+2x>1.$$
It is a classical result that the prime $p$ could be uniquely written as $$p=m^2+2n^2,$$ where $m,n$ are odd positive integers. According to the first remark of this section, $n=1$ or $n$ is a prime number.

{\bf The subcase $n=1, p=m^2+2>3$.} Let $x$ an odd integer such that $1\leq x <m$. We know that $p-x^2=2q^a$, where $q$ is an odd prime and $a=0,1,2$. The values $a=0,2$ does not occur since $p$ is uniquely written as $p=m^2+2n^2$. Hence $p-x^2$ is twice a prime number  for any odd $x$ such that $1\leq x<m$. We consider the field $\mathbb{K}=\mathbb{Q}(\sqrt {p})$, which has the Minkowski's constant $\sqrt p<m+1$. The ring of the integers for $K$ is $A=\mathbb{Z}[\sqrt {p}]$. 
We have $2A=P^2$, where $P$ is the maximal ideal $P=(m+\sqrt p)A$. Let $q$ an odd prime such that $q< m$. If $p$ is a quadratic residue modulo $q$, we have the equality
$$p=x^2+2q,$$ where $x$ is an odd integer such that $1\leq x<q$. We obtain the contradiction
$$q^2<m^2<p=x^2+2q\leq (q-2)^2+2q=q^2-2q+4.$$
Since $p>3$, it follows that 3 divides $m$. The only possibility for $m$ to be prime is when $m=3$ and $p=11$, which is a solution for our problem. Hence, for $p>11$, for every odd prime $q$ smaller than Minkowski's constant, $p$ is a quadratic non-residue modulo $q$. This means that $A$ is a principal ring. According to a theorem of Jungyun Lee (see [8]), there are only four prime numbers $p$ as above: $p=3,11,83,227$.

{\bf The subcase $p=m^2+2n^2$, $n$ prime, $2m>n$.} We have
$$p-(2n-m)^2=2n(2m-n).$$ According to the hypothesis, $2m-n$ should be 1. Hence 
$$p=m^2+2(2m-1)^2=9m^2-8m+2.$$
We consider again the field $\mathbb{K}=\mathbb{Q}(\sqrt {p})$, which has the Minkowski's constant $\sqrt p$.
We have $2A=P^2$, where $P$ is a principal maximal ideal (since the equation $x^2-py^2=-2$ has integer solutions for every prime number $p\equiv 3 \pmod 8$). Let $q$ an odd prime such that $q< \sqrt p$. If $p$ is a quadratic residue modulo $q$, we have the equality
$$p=x^2+2q^a,$$ where $x$ is an odd integer such that $1\leq x<q$ and $a$ is 1 or 2. If $a=1$, we obtain the contradiction
$$q^2<p=x^2+2q\leq (q-2)^2+2q=q^2-2q+4.$$ If $a=2$, then $q=n$, according to the uniqueness for $p$ as a sum of the type $m^2+2n^2$. In this case $nA=P_1P_2$, where $P_1,P_2$ are maximal ideals of norm $n$. Since 
$$p-(m+n)^2=n^2-2mn=n(n-2m)=-n,$$ it follows that $P_1$ and $P_2$ are principal ideals.
Hence, the only odd prime $q$ smaller than Minkowski's constant, such that $p$ is a quadratic residue modulo $q$ is $q=n$ and in this case the corresponding maximal ideals are principal. This means that $A$ is a principal ring. The key point now is the form of $\sqrt p$ as a continuous fraction:
$$\sqrt p = (3m-2; \overline {1,2,3m-2,2,1,6m-4}).$$
According to a theorem of Mollin and Williams (see [10]), there are at most 19 positive square-free numbers $d$ with such a property: the field $\mathbb{Q}(\sqrt d)$ has class number one and the length of the period of $\sqrt d$ as a continuous fraction is six; from the known 18 values, only four are prime numbers: $19, 59,107,131$. The only value which fits here is $p=59$. And there is at most one other value with this property. 

{\bf The subcase $p=m^2+2n^2$, $n$ prime, $2m<n$.} We have
$$p-(3m)^2=2(n-2m)(n+2m).$$ According to the hypothesis, $n-2m$ should be 1. Hence 
$$p=m^2+2(2m+1)^2=9m^2+8m+2.$$
We consider again the field $\mathbb{K}=\mathbb{Q}(\sqrt {p})$, which has the Minkowski's constant $\sqrt p$.
We have $2A=P^2$, where $P$ is a principal maximal ideal (we saw this before). Let $q$ an odd prime such that $q< \sqrt p$. If $p$ is a quadratic residue modulo $q$, we have the equality
$$p=x^2+2q^a,$$ where $x$ is an odd integer such that $1\leq x<q$ and $a$ is 1 or 2. We saw in the previous subcase that the case $a=1$ is impossible.
 If $a=2$, then $q=n$, according to the uniqueness for $p$ as a sum of the type $m^2+2n^2$. In this case, $nA=P_1P_2$, where $P_1,P_2$ are maximal ideals of norm $n$. Since 
$$p-(m+n)^2=n^2-2mn=n(n-2m)=n,$$ it follows that $P_1$ and $P_2$ are principal ideals.
Hence, the only odd prime $q$ smaller than Minkowski's constant, such that $p$ is a quadratic residue modulo $q$ is $q=n$ and in this case the corresponding maximal ideals are principal. This means that $A$ is a principal ring. The key point now is again  the form of $\sqrt p$ as a continuous fraction:
$$\sqrt p = (3m+1; \overline {2,1,3m,1,2,6m+2}).$$
According to the quoted theorem of Mollin and Williams ([10]), there are only finitely numbers $p$ with such a property (the ring $\mathbb{Z}[\sqrt p]$ is principal and the length of the period of $\sqrt p$ as a continuous fraction is six). Looking in the table, we see that the only values which fit are $p=19,107$. And there is at most one other value with this property. 
\end{proof}

\section{The case $p\equiv  1\pmod 8$}
We need the following Lemma.
\begin{lemma}
The prime $p\equiv 1 \pmod 8$ is like in Theorem 1.1. There is no odd prime $q$ such $16q^2<p$ and $p$ is a quadratic residue modulo $q$.
\end{lemma}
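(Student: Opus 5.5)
The plan is to exploit that every odd $x$ with $x<4q$ is admissible, because $16q^2<p$. All the odd integers congruent to $\pm r$ modulo $q$ in $(0,4q)$, where $r^2\equiv p \pmod q$, give values $p-x^2$ divisible by $q$. By hypothesis each such value has the form $2^{\alpha}q^{\beta}$, with $\alpha\ge 3$ since $p\equiv 1\pmod 8$ and $x$ is odd. From these several simultaneous equations I would derive a contradiction, in the spirit of the $2q-x$ trick used in Sections 2 and 3.

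\textbf{First step (choice of $y$).} Let $x\in(0,2q)$ be the odd integer with $x^2\equiv p\pmod q$ in a fixed class $\pm r$. Then $2q-x$ is also odd, lies in $(0,2q)$ and satisfies the same congruence. We have
$$(p-x^2)-(p-(2q-x)^2)=4q(q-x),$$
and $q\nmid q-x$ since $q\nmid x$. So $q^2$ cannot divide both values. Call $y$ the one for which $q^2\nmid p-y^2$. Then
$$p-y^2=2^a q,\qquad a\ge 3.$$
Moreover $2^a q=p-y^2>16q^2-4q^2=12q^2$, so $2^a>12q$.

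\textbf{Second step (shifted values).} I would apply the hypothesis to the odd integers $2q-y$, $2q+y$, $4q-y$ (and if needed $4q+y$ when $y$ is small), all below $4q<\sqrt p$. Expanding $(2q\pm y)^2$ and $(4q- y)^2$ gives
$$p-(2q+y)^2=4q\,(2^{a-2}-q-y),\qquad p-(2q-y)^2=4q\,(2^{a-2}-q+y),$$
$$p-(4q-y)^2=8q\,(2^{a-3}-2q+y).$$
Hence $A=2^{a-2}-q-y$, $B=A+2y$ and $C=2^{a-3}-2q+y$ are all positive (positivity comes from $16q^2<p$) and each is of the form $2^iq^j$. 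Since $B-A=2y$ and $q\nmid 2y$, at most one of $A$ and $B$ is divisible by $q$, so at least one of them is a pure power of $2$. I would then split into cases: both $A,B$ powers of $2$; $A=2^i$, $B=2^kq^j$ with $j\ge1$; or the symmetric case. In each case I would use parity and $2$-adic valuations together with the relation $A+B=2(2^{a-2}-q)$, the size bound $2^a>12q$, and $y<2q$.

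\textbf{Main obstacle.} I expect the hardest step to be this final case analysis, in particular excluding $A$ and $B$ both being powers of $2$. Then $2^i+2^k=2^{a-1}-2q$ and $2^k-2^i=2y<4q$ hold simultaneously, and I would compare $2$-adic valuations to pin down $i,k$ and force $q$ or $y$ into a tiny range. If that alone does not close the case, I would bring in the third equation for $C$. After substituting $2^{a-3}=q+(A+B)/4$, $C$ becomes
$$C=\frac{A+B}{4}-q+y,$$
again of the form $2^iq^j$. The extra constraint should then collapse the remaining possibilities. Where necessary I would also repeat the argument with the other residue class $-r$, which produces an analogous system with the same $p$, to rule out leftover configurations.
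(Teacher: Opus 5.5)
\textbf{Verdict: genuine gap.} Your setup is correct and sound, but the decisive step is not carried out, and the one formula you offer for it is wrong.

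\textbf{What is right.} You use the same four numbers as the paper. Since $2q-y$ is the odd representative of the class $-r$, we have $\{y,2q-y,2q+y,4q-y\}=\{m,2q-m,2q+m,4q-m\}$ with $m<q$ odd. So repeating with the class $-r$ would add nothing. Your formulas for $A,B,C$, the bound $2^a>12q$ and the positivity of $A,B,C$ are all correct.

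\textbf{The gap.} The only substantive part of the lemma is extracting a contradiction from $A,B,C$, and you only describe a case split that ``should collapse''. The concrete relation you propose is also wrong. From $A+B=2^{a-1}-2q$ you get $2^{a-3}=(A+B)/4+q/2$, not $q+(A+B)/4$. Hence $C=(A+B)/4-3q/2+y$, or more usefully $2C=A+3(y-q)=B+y-3q$. Your version cannot be an integer, since $v_2(A+B)=1$.

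\textbf{How to close it.} You need two parity facts you do not state. First, $A$ and $B$ are even, because $8\mid p-s^2$ for odd $s$. Since $B/2-A/2=y$ is odd, exactly one of $A,B$ is $\equiv 2 \pmod 4$. Second, $C$ is odd because $a\ge 6$, hence $C=q^c$. Combined with the fact that at most one of $A,B$ is divisible by $q$, three cases remain, each short.

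\emph{Case $A=2$.} From $2C=2+3(y-q)>0$ and $q\nmid y$ you get $q<y<2q$. So $B=2(y+1)\in[2q+6,4q]$, and $B$ is either $4q$ or a power of $2$.
If $B=4q$, then $y=2q-1$ and $C=(3q-1)/2$, which is $>1$ and prime to $q$, a contradiction.
If $B=2^k$, then $2^{a-2}=q+y+2=2^{k-1}+q+1$ lies strictly between $2^{k-1}$ and $2^k$, a contradiction.

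\emph{Case $A=2^i$ ($i\ge 2$), $B=2q^l$ ($l\ge 1$).} Here $2q^c=3q^l-3q-2^{i-1}$, which forces $c=0$.
For $l=1$ this gives $2^{i-1}=-2$, impossible.
For $l\ge 2$ it gives $y=3q+2-2q^l<0$, also impossible.

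\emph{Case $A=2q^j$ ($j\ge 1$), $B=2^k$.} Here $2q^c+q^j+3q=3\cdot 2^{k-1}$.
If $c=0$, this leads to $q=1$.
If $c\ge 1$, then $q=3$ and $y\in\{1,5\}$. Now $2^{a-2}=2\cdot 3^j+3+y\equiv 2 \pmod 4$, which is impossible.

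\textbf{Comparison with the paper.} Your route can therefore be completed, and it is a genuine variant of the paper's. The paper first shows $v_q=1$ for both $p-m^2$ and $p-(4q-m)^2$. It then uses $v_2\bigl((4q-m)^2-m^2\bigr)=3$ to get $p-(4q-m)^2=8q$, hence $p=16q^2+1$, which contradicts $v_q(p-1)=1$. As written, though, your proposal stops exactly where the proof has to happen.
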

\begin{proof}
Suppose, by contradiction, that for an odd prime $q$ such that $16q^2<p$, there is an odd positive integer $m<q$ such that $p\equiv m^2\pmod q$ and $16q^2<p$ (if $m$ is even, we replace $m$ by $q-m$). We have  the following four equalities:
\begin{equation}
p=m^2+2^aq^x
\end{equation}
\begin{equation}
p=(2q-m)^2+2^bq^y
\end{equation}
\begin{equation}
p=(2q+m)^2+2^cq^z
\end{equation}
\begin{equation}
p=(4q-m)^2+2^dq^t,
\end{equation}
where $a,b,c,d,x,y,z,t$ are positive integers and $a,b,c,d\geq 3$.
 Suppose $x\geq 2$. It follows at once that $y=z=1$. Since 16  does not divide $8qm=(2q+m)^2-(2q-m)^2$, we get from (2) and (3) that $b=3$ or $c=3$. We obtain a contradiction, since
$$9q^2<p=(2q\pm m)^2+8q\leq (3q-2)^2+8q=9q^2-4q+4<9q^2.$$ Hence $x=1$. In the same way we prove  that $t=1$. Since $(4q-m)^2-m^2=8q(2q-m)$, it follows from (1) and (4) that $a=3$ or $d=3$. For obvious reasons, $d$ should be 3. Since 
$$16q^2<p=(4q-m)^2+8q\leq (4q-1)^2+8q=16q^2+1,$$ it follows that $m=1$ and $p=16q^2+1$. Therefore $x=2$ and this is a contradiction (we saw above that $x=1$).
\end{proof}

We will divide the proof, in this case, in several steps.

\begin{proof} {\bf First step: $p$ could be written as an expression in two powers of 2.}
We put $p$ between two consecutive squares. Consider first the subcase $(2k-1)^2<p<(2k)^2$. We will prove that \begin{equation}p=(2k-1)^2+2^u.
\end{equation}

Let us suppose that $p=(2k-1)^2+2^uq^a$, where $a$ is a positive integer and $q$ an odd prime number. Then $p$ is a quadratic residue modulo $q$ and 
$$4k^2>p=(2k-1)^2+2^uq^a\geq (2k-1)^2+8q.$$
From here we get $8q<4k-1$, $8q\leq 4k-4$, $4q\leq 2k-2$ and $16q^2<(2k-2)^2<(2k-1)^2<p$. This is a contradiction, according to the Lemma. We proved (5). Since $p<4k^2$, we have the inequalities $2^u<4k^2-(2k-1)^2=4k-1<4k,$ $2^{u-3}<\frac{k}{2}$. We have $(1+2^{u-2})^2<p$, since $$(1+2^{u-2})^2<(1+k)^2\leq (2k-1)^2<p.$$
Since $1+2^{u-2}$ is an odd number (don't forget that $u\geq 3$) and $(1+2^{u-2})^2<p$, we can apply the condition from the statement of Theorem 1.1, and we obtain that $p-(1+2^{u-2})^2$ has at most two prime factors (one of them being for sure 2). Since $2^u-(2^{u-2}+1)^2=-(2^{u-2}-1)^2$, we have 
\begin{equation}
p-(2^{u-2}+1)^2=4(k-2^{u-3})(k+2^{u-3}-1)
\end{equation}
Since $(k-2^{u-3})+2^{u-2}-1=(k+2^{u-3}-1)$, one of the numbers $(k-2^{u-3})$ and $(k+2^{u-3}-1)$ is odd and the other one is even. Suppose that the above numbers have a common odd prime factor $q$. From the Lemma we know that $16q^2<p$. Hence, the only possible case is when $q=(k-2^{u-3})$, $2q=(k+2^{u-3}-1)$ and $q=2^{u-2}-1$.
For $u=4$, we have $q=3,k=5$ and $p=9^2+2^4=97$, which is a solution for our problem. If $u\geq 5$, it follows that $u$ is odd (since $u-2$ is a prime number, $q$ being a Mersenne prime). We obtained a contradiction since we have two writtings for $p$ as a sum $b^2+2c^2$:
$$p=(2k-1)^2+2^u=(2^{u-2}+1)^2+8q^2.$$
For $p\neq 97$, we know that the numbers $(k-2^{u-3})$ and $(k+2^{u-3}-1)$ are coprime. Since $p-(2^{u-2}+1)^2$ has at most two prime factors, it follows from (6) that $(k-2^{u-3})=2^t$ or $(k+2^{u-3}-1)=2^t$. In the first case we have 
\begin{equation}
p=(2^{u-2}+2^{t+1}-1)^2+2^u
\end{equation}
and in the second case we have
\begin{equation}
p=(1-2^{u-2}+2^{t+1})^2+2^u.
\end{equation}

Consider now the subcase $(2k)^2<p<(2k+1)^2$. We will prove that \begin{equation}p=(2k+1)^2-2^u.
\end{equation}

Let us suppose that there is an odd prime $q$ such that $q$ divides $(2k+1)^2-p$. Then $p$ is a quadratic residue modulo $q$ and 
$$(2k)^2<p\leq (2k+1)^2-8q.$$
From here we get $8q<4k+1$, $8q\leq 4k$, $4q\leq 2k$ and $16q^2\leq (2k)^2<p$. This is a contradiction, according to the Lemma. We proved (9). Since $p>4k^2$, we have the inequalities $2^u<(2k+1)^2-(2k)^2=4k+1, 2^u\leq 4k$, $2^{u-2}\leq k$. We have $(2^{u-2}-1)^2<p$, since $$(2^{u-2}-1)^2\leq (k-1)^2\leq (2k)^2<p.$$
Since $2^{u-2}-1$ is an odd number (don't forget that $u\geq 3$) and $(2^{u-2}-1)^2<p$, we can apply the condition from the statement of Theorem 1.1, and we obtain that $p-(2^{u-2}-1)^2$ has at most two prime factors (one of them being for sure 2). Since $-2^u-(2^{u-2}-1)^2=-(2^{u-2}+1)^2$, we have 
\begin{equation}
p-(2^{u-2}-1)^2=4(k-2^{u-3})(k+2^{u-3}+1)
\end{equation}
Since $(k-2^{u-3})+2^{u-2}+1=(k+2^{u-3}+1)$, one of the numbers $(k-2^{u-3})$ and $(k+2^{u-3}+1)$ is odd and the other one is even. Suppose that the above numbers have a common odd prime factor $q$. From the Lemma we know that $16q^2<p$. Hence, the only possible case is when $q=(k-2^{u-3})$, $2q=(k+2^{u-3}+1)$ and $q=2^{u-2}+1$.
For $u=3$, we have $q=3,k=4$ and $p=9^2-2^3=73$, which is a solution for our problem. If $u\geq 4$, it follows that $u$ is even (since $u-2$ is a power of 2, $q$ being a Fermat prime). We have:
$$p=(2k+1)^2-2^u=(2k+1-2^{u/2})(2k+1+2^{u/2}).$$ Since $p$ is prime, $2k+1-2^{u/2}=1$, $2k=2^{u/2}\geq 2^{u-1}$ (we used the inequality $k\geq 2^{u-2}$). This is impossible since $u-1>u/2$ for $u\geq 4$.
For $p\neq 73$, we know that the numbers $(k-2^{u-3})$ and $(k+2^{u-3}+1)$ are coprime. Since $p-(2^{u-2}+1)^2$ has at most two prime factors, it follows from (10) that $(k-2^{u-3})=2^t$ or $(k+2^{u-3}+1)=2^t$. In the first case we have 
\begin{equation}
p=(2^{u-2}+2^{t+1}+1)^2-2^u
\end{equation}
and in the second case we have
\begin{equation}
p=(2^{t+1}-2^{u-2}-1)^2-2^u.
\end{equation}

We prove that equality (12) occurs only for $p=17$. Suppose that we have such an equality. Since $2^{u-2}\leq k=2^t-2^{u-3}-1$, we have $2^t\geq 2^{u-2}+2^{u-3}+1; t\geq u-1$. Suppose $u=2a$ ($a\geq 2$ since $u\geq 3$). From (12) and taking into account that $p$ is prime, we obtain
$$2^{t+1}-2^{2a-2}-1-2^a=1.$$ We obtained a contradiction since $t+1>2a-2\geq a\geq 2$ and $2^{t+1}-2^{2a-2}-2^a=2$. Hence $u=2a+1$ is odd. Suppose $t=2b+1$ is odd. From (12), we obtain
$$p=(2^{t+1}-2^{u-2}+1)^2-2^{2b+4}=(2^{t+1}-2^{u-2}+1-2^{b+2})(2^{t+1}-2^{u-2}+1+2^{b+2}).$$ Since $p$ is prime, we should have $2^{t+1}-2^{u-2}+1-2^{b+2}=1$. This means that 
$$2^{2b+2}-2^{b+2}-2^{u-2}=0.$$ This is impossible since $2b+1=t\geq u-1$ and $2b+1\geq b+2$. Hence $t$ is even, $t=2b$. From (12) we have
$$p=(2^{t+1}+2^{u-2}-1)^2-2^{t+u+1}=(2^{t+1}+2^{u-2}-1-2^{a+b+1})(2^{t+1}+2^{u-2}-1+2^{a+b+1}).$$
Since $p$ is prime, we should have $2^{t+1}+2^{u-2}-1-2^{a+b+1}=1$. This means that 
\begin{equation}2^{2b+1}+2^{2a-1}-2^{a+b+1}=2.
\end{equation}We have $2b=t\geq u-1=2a, b\geq a$, $2b+1\geq a+b+1$ and $2a-1\geq 1$. From these inequalities and from (13) it follows that $a=b=1, u=3,t=2,k=2,p=17$. 

Looking to formula (11), we see that it could be written also as
$$p=(2^{t+1}+2^{u-2}-1)^2+2^{t+3}.$$ This is the same as formula (7), switching $t+1$ and $u-2$. From al these remarks, we have to deal only with formulas (7) and (8).

{\bf The case $p=(2^{2x+1}+2^y-1)^2+2^{y+2}$.} Looking to formula (7), we claim that $t$ is even. If $t$ is odd, then 
$$p=(2^{u-2}+2^{t+1}-1)^2+2^u=(2^{u-2}+2^{t+1}+1)^2-2^{t+3}$$
and $p=(2^{u-2}+2^{t+1}+1-2^{\frac{t+3}{2}})(2^{u-2}+2^{t+1}+1+2^{\frac{t+3}{2}}).$ Since $p$ is prime, it follows that $(2^{u-2}+2^{t+1}+1-2^{\frac{t+3}{2}})=1$. This is impossible since $t+1\geq \frac{t+3}{2}$. Then $t$ is even, $t=2x$. We write $u-2=y$ and we have to analyse the case $p=(2^{2x+1}+2^y-1)^2+2^{y+2}$, where $x$ is a non-negative integer and $y$ a positive integer. 
We suppose, for the moment, that $p>433$. We will prove that there is an odd prime $q$ in the set $\{5,7,13,17,109,241,433\}$ such that $p$ is a quadratic residue modulo $q$. Suppose that this is false. Since $p$ is a quadratic non-residue modulo 5, we have six possibilities: $(x,y)\equiv (0,0), (0,1), (0,3), (1,0),(1,2),(1,3)$, where the congruence for $x$ is modulo 2 and the congruence for $y$ is modulo 4. Since $p$ is a quadratic non-residue modulo 17, we have 
\begin{equation}
(x,y)\equiv (0,3), (0,5),(1,6),(1,7),(2,3),(2,4),(3,2),(3,4),
\end{equation}
where the congruence for $x$ is modulo 4 and the congruence for $y$ is modulo 8. Since $p$ is a quadratic non-residue modulo 7, we found the following six possibilities for the pair $(x,y)$ modulo 3:
\begin{equation}
(x,y)\equiv (0,1), (0,2),(1,0),(1,1),(2,0),(2,1) \pmod 3.
\end{equation}
Combining (14) and (15), we have 48 possibilities for the pair $(x,y)$ ($x$ modulo 12 and $y$ modulo 24). We take into account that $2^{12}\equiv 1 \pmod {13}$ and $2^{24}\equiv 1 \pmod {241}$. Since we supposed that $p$ is a quadratic non-residue modulo 13 and 241, we remain only with five possibilities:
\begin{equation}
(x,y)\equiv (8,3), (5,7),(6,20),(2,4),(3,10),
\end{equation}
where the congruence for $x$ is modulo 12 and the congruence for $y$ is modulo 24. We analyse now 45 cases, when $x$ is considered modulo 36 and $y$ is considered modulo 72, taking into account that $2^{36}\equiv 1 \pmod {109}$ and $2^{72}\equiv 1 \pmod {433}$. In all these 45 cases, we obtain that $p$ is a quadratic residue modulo 109 or 433. Taking into account the Lemma, it follows that 
$$p<16\cdot 433^2.$$ Since $p=(2^{2x+1}+2^y-1)^2+2^{y+2}$, we obtain $2^{2x+1}+2^y-1<4\cdot 433=1732.$ From here, we get that $2x+1\in \{1,3,5,7,9\}$ and $y\in \{1,2,3,4,5,6,7,8,9,10\}$. Cheking all the cases, we find the good values 
$$p=17,41,89,113,137,257,353,593,1097,1217,1553,2273,5297.$$

{\bf The case $p=(2^{x}-2^y+1)^2+2^{y+2}$.} Looking to formula (8), we claim that $t\equiv u \pmod 2$. If $u\equiv t+3\pmod 2$, then

\begin{equation}
p=(-2^{u-2}+2^{t+1}+1)^2+2^u=(2^{u-2}-2^{t+1}+1)^2+2^{t+3}.
\end{equation}
Since the prime $p$ is uniquely written as $a^2+b^2$ or $c^2+2d^2$, it follows that $u=t+3$ and $p=1+2^u\geq 17$. It follows that $p$ is a quadratic residue modulo 17. 

In the sequel $t\equiv u \pmod 2$. We write $u-2=y,t+1=x$ and we have to analyse the case $p=(2^{x}-2^y+1)^2+2^{y+2}$, where $x,y$ are positive integers with different parity. According to (17), we can suppose that $x$ is even and $y$ is odd.
We suppose, for the moment, that $p>433$. We will prove that there is an odd prime $q$ in the set $\{5,7,13,17,109,241,433\}$ such that $p$ is a quadratic residue modulo $q$. Suppose that this is false. Since $p$ is a quadratic non-residue modulo 5, we have three possibilities: $(x,y)\equiv (0,1), (0,3), (2,1)\pmod 4$. Since $p$ is a quadratic non-residue modulo 17, we have 
\begin{equation}
(x,y)\equiv (4,1), (4,3),(2,5),(6,5)\pmod 8.
\end{equation}
Since $p$ is a quadratic non-residue modulo 7, we found the following six possibilities for the pair $(x,y)$ modulo 3:
\begin{equation}
(x,y)\equiv (0,0), (0,2),(1,2),(2,0),(2,1),(2,2) \pmod 3.
\end{equation}
Combining (18) and (19), we have 24 possibilities for the pair $(x,y)$ modulo 24. We take into account that $2^{12}\equiv 1 \pmod {13}$ and $2^{24}\equiv 1 \pmod {241}$. Since we supposed that $p$ is a quadratic non-residue modulo 13 and 241, we remain only three possibilities:
\begin{equation}
(x,y)\equiv (4,17), (20,1),(14,21) \pmod {24}.
\end{equation}
 We analyse now 27 cases, when $x,y$ are considered modulo 72, taking into account that $2^{36}\equiv 1 \pmod {109}$ and $2^{72}\equiv 1 \pmod {433}$. In all these 27 cases, we obtain that $p$ is a quadratic residue modulo 109 or 433. Taking into account the Lemma, it follows that 
$$p<16\cdot 433^2.$$ Since $p=(2^{x}-2^y+1)^2+2^{y+2}$, we obtain $|2^{x}-2^y+1|<4\cdot 433=1732.$ From here, we get that $x,y\in \{1,2,3,4,5,6,7,8,9,10,11\}$ and $x$ is even and $y$ odd. Cheking all the cases, we find the good values 
$$p=17,41,113,233,353,857,1217.$$
Puting together all the facts, we found that in the case $p\equiv 1 \pmod 8$ we have the following solutions for Theorem 1.1:
$$p=17,41,73,89,97,113,137,233,257,353,593,857,1097,1217,1553,2273,5297.$$

\end{proof}

\
{\large \bf References}

[1] A. Bir\'{o}, \textit{Yokoi's conjecture}, Acta Arith., {\bf 106} (2003), 85--104.

[2] A. Bir\'{o}, \textit{Chowla's conjecture}, Acta Arith., {\bf 107} (2003), 179--194.

[3] A. Gica, \textit{An additive problem}, An. Univ. Buc. Mat., {\bf 53} (2004), 229--234.

[4] A. Gica, \textit{Some strange primes}, Bull. Math. Soc. Sci. Math. Roumanie, {\bf 51(99)} (2008), 213--217.

[5] M. Epure, A. Gica, \textit{Principal quadratic real fields in connection with some additive problems}, Bull. Math. Soc. Sci. Math. Roumanie, {\bf 53(101)} (2010), 251--259.

[6] A. Gica, F. Luca, \textit{On the Diophantine equation $2^x=x^2+y^2-2$}, Functiones et Approximatio, {\bf 46} (2012), 109--116.

[7] A. Gica, \textit{Class numbers, Ono invariants and some interesting primes}, Indagationes Mathematicae, {\bf 35} (2024), 1249--1258.

[8] J. Lee, \textit{The complete determination of wide Richaud-Degert types which are not 5 modulo 8 with class number one}, Acta Arith. {\bf 140} (2009), 1--29.

[9] S. Louboutin, \textit{Prime producing quadratic polynomials and class numbers of real quadratic fields}, Canad. J.  Math. {\bf 42} (1990),315--341.

[10] R.A. Mollin, H.C. Williams, \textit{On a Determination of Real Quadratic Fields of Class Number One and Related Continued Fraction Period Length Less than 25}, Proc. Japan Acad., {\bf 67} (1991), 20--25.

\end{document}